  \DeclareSymbolFont{AMSb}{U}{msb}{m}{n}
  \DeclareSymbolFontAlphabet{\mathbb}{AMSb}}  
\definecolor{Gray}{gray}{0.93} 
\definecolor{LightCyan}{rgb}{0.88,1,1}
\theoremstyle{plain}
\newtheorem{theorem}{Theorem}[section]
\newtheorem{lemma}[theorem]{Lemma}
\newtheorem{corollary}[theorem]{Corollary}
\theoremstyle{definition}
\newtheorem{definition}[theorem]{Definition}
\newtheorem{example}[theorem]{Example}
\theoremstyle{remark}
\newtheorem*{remark}{Remark}
\numberwithin{equation}{section}
\def\th@plain{%
  \thm@notefont{}
  \itshape 
}
\def\th@definition{%
  \thm@notefont{}
  \normalfont 
} \makeatother
\setlist{font=\normalfont}
\DeclareMathAlphabet{\cols}{OMS}{cmsy}{m}{n} %
\newcommand{\norm}[1]{\|#1\|}
\newcommand{\set}[1]{\{#1\}}
\newcommand{\cset}[2]{\set{{#1}\colon{#2}}}
\newcommand{\abs}[1]{|#1|}
\newcommand{\gen}[1]{\langle#1\rangle}
\newcommand{\N}{\mathbb{N}}
\newcommand{\Z}{\mathbb{Z}}
\newcommand{\R}{\mathbb{R}}
\newcommand{\aut}[1]{\mathrm{Aut}\,{(#1)}}
\newcommand{\Cay}[1]{\mathrm{Cay}\,{(#1)}}
\newcommand{\caut}[1]{\mathrm{Aut}_c{(#1)}}
\newcommand{\dCay}[1]{\overrightarrow{\mathrm{Cay}}{(#1)}}
\newcommand{\dcCay}[1]{\overrightarrow{\mathrm{Cay}}_{c}{(#1)}}
\newcommand{\diam}[1]{\mathrm{diam}\,{(#1)}}
\newcommand{\Iso}[1]{\mathrm{Iso}\,{(#1)}}
\newcommand{\sym}[1]{\mathrm{Sym}\,{(#1)}}
\newcommand{\paut}[1]{\mathrm{Aut}_p{(#1)}}
\newcommand{\qt}[1]{``#1''}
\begin{document}
\title{\bf Geometry of generated groups with metrics induced by their Cayley color graphs}
\author{Teerapong Suksumran\,\href{https://orcid.org/0000-0002-1239-5586}{\includegraphics[scale=1]{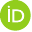}}\\
Research Center in Mathematics and Applied Mathematics\\
Department of Mathematics\\
Faculty of Science, Chiang Mai University\\
Chiang Mai 50200, Thailand\\
{\tt teerapong.suksumran@cmu.ac.th}}
\date{}
\maketitle


\begin{abstract}
Let $G$ be a group and let $S$ be a generating set of $G$. In this article, we introduce a metric $d_C$ on $G$ with respect to $S$, called the cardinal metric. We then compare geometric structures of $(G, d_C)$ and $(G, d_W)$, where $d_W$ denotes the word metric. In particular, we prove that if $S$ is finite, then $(G, d_C)$ and $(G, d_W)$ are not quasi-isometric in the case when $(G, d_W)$ has infinite diameter and they are bi-Lipschitz equivalent otherwise. We also give an alternative description of cardinal metrics by using Cayley color graphs. It turns out that color-permuting and color-preserving automorphisms of Cayley digraphs are isometries with respect to cardinal metrics.
\end{abstract}
\textbf{Keywords.} Cardinal metric, Cayley graph, color-permuting automorphism, color-preserving automorphism, isometry of metric space.\\[3pt]
\textbf{2010 MSC.} Primary 20F65; Secondary 05C25, 05C12, 20F38.
\thispagestyle{empty}

\section{Preliminaries}
By a {\it generated} group $(G, S)$ we mean a group $G$ with a specific generating set $S$. In the case when $S$ is finite, we say that $(G, S)$ is a {\it finitely generated} group. Throughout this article, we assume that every generating set of a group does not contain the group identity. One of the most important metrics defined on a \mbox{generated} group is presented below.

\begin{definition}[Word metrics]
Let $(G, S)$ be a generated group. The {\it word metric} with respect to $S$, denoted by $d_W$, is defined on $G$ by
\begin{equation}
d_W(g, h) = \min{\cset{n\in\N}{g^{-1}h = s_1^{\epsilon_1}s_2^{\epsilon_2}\cdots s_n^{\epsilon_n}, s_i\in S, \epsilon_i\in\set{\pm 1}}}
\end{equation}
for all $g, h\in G$ with $g\ne h$ and $d_W(g, g) = 0$ for all $g\in G$.
\end{definition}

Let $(G, S)$ be a generated group. Recall that the {\it Cayley digraph} of $G$ with respect to $S$, denoted by $\dCay{G, S}$, is a directed graph (also called a digraph) such that
\begin{enumerate}[label=(\roman*)]
\item the vertex set is $G$ and
\item the set of arcs is $\cset{(g, gs)}{g\in G, s\in S}$.
\end{enumerate}
The (undirected) {\it Cayley graph} of $G$ with respect to $S$, denoted by $\Cay{G, S}$, is defined as the underlying graph of $\dCay{G, S}$; that is, the vertex sets of $\Cay{G, S}$ and $\dCay{G, S}$ are the same and $\set{g, h}$ is an edge in $\Cay{G, S}$ if and only if $(g, h)$ or $(h, g)$ is an arc in $\dCay{G, S}$. A strong connection between word metrics and Cayley graphs is reflected in the fact that the distance between two arbitrary points $g$ and $h$ in $G$ measured by the word metric coincides with the shortest length of a path joining vertices $g$ and $h$ in $\Cay{G, S}$.

The word metric of a generated group leads to a {\it group-norm} \cite{NBAONVT2010}, which is a function analogous to a norm on a linear space. In fact, if $(G, S)$ is a generated group, then
\begin{equation}
\|g\|_W= \min{\cset{n\in\N}{g = s_1^{\epsilon_1}s_2^{\epsilon_2}\cdots s_n^{\epsilon_n}, s_i\in S, \epsilon_i\in\set{\pm 1}}}
\end{equation}
for all $g\ne e$ defines a group-norm on $G$. For basic knowledge of geometric group theory, we refer the reader to \cite{MR3729310}.

\section{Generated groups endowed with cardinal metrics}
Motivated by word metrics, we introduce a new metric on a group with a \mbox{specific} (finite or infinite) generating set. This metric enriches the group structure and so the group becomes a metric space with a norm-like function. We then examine its geometric structure. In particular, we show that large scale geometry of a certain finitely generated group is different when it is equipped with this new metric instead of the word metric. This enables us to investigate finitely generated groups from another point of view.

Let $G$ be a group and let $S$ be a subset of $G$. Recall that $G = \gen{S}$ if and only if every element of $G$ is of the form $s_1^{\epsilon_1}s_2^{\epsilon_2}\cdots s_n^{\epsilon_n}$, where $s_i\in S$ and $\epsilon_i\in\set{\pm 1}$ for all $i = 1,2,\ldots, n$. Let $(G, S)$ be a generated group. According to the well-ordering principle, we can define a function $\norm{\cdot}$ corresponding to $S$, called the {\it cardinal norm} on $G$, by
\begin{equation}\label{eqn: cardinal norm}
\norm{g} = \min\cset{\abs{A}}{A\subseteq S\textrm{ and }g\in \gen{A}}
\end{equation}
for all $g\in G$. For emphasis and clarity, we sometimes use the notation $\norm{\cdot}_S$. Note that the cardinal norm defined by \eqref{eqn: cardinal norm} does depend on a given generating set $S$. For example, consider the symmetric group $S_3$ and let $S = \set{(1\ 2),  (1\ 2\ 3)}$ and $T = \set{(1\ 2),  (1\ 3), (1\ 2\ 3)}$. Then $S$ and $T$ are generating sets of $S_3$. In this case, $\norm{(1\ 3)}_S = 2$, whereas $\norm{(1\ 3)}_T = 1$.

\begin{theorem}\label{thm: group-norm induced by cardinal metric}
Let $(G, S)$ be a generated group. The cardinal norm induced by $S$ is a group-norm; that is, it satisfies the following properties:
\begin{enumerate}
\item\label{item: positivity} $\norm{g}\geq 0$  for all $g\in G$ and $\norm{g} = 0$ if and only if $g$ is the identity of $G$;
\item\label{item: invariant under taking inverses} $\norm{g^{-1}} = \norm{g}$ for all $g\in G$;
\item\label{item: subadditivity} $\norm{gh}\leq \norm{g}+\norm{h}$ for all $g, h\in G$.
\end{enumerate}
\end{theorem}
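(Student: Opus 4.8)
The plan is to verify the three group-norm axioms directly from the definition $\norm{g} = \min\cset{\abs{A}}{A\subseteq S,\ g\in\gen{A}}$, exploiting only the elementary fact that $\gen{A}$ is a subgroup of $G$ for every $A\subseteq S$ and that $G=\gen{S}$ guarantees the minimum is taken over a nonempty collection of finite cardinals (so the well-ordering principle applies and $\norm{g}$ is well-defined for each $g\in G$).

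For \eqref{item: positivity}, nonnegativity is immediate since $\abs{A}\geq 0$ always. For the equality case, if $g=e$ then $g\in\gen{\emptyset}=\set{e}$, so $\norm{g}\leq\abs{\emptyset}=0$, hence $\norm{g}=0$; conversely, if $\norm{g}=0$ then the minimizing set $A$ has $\abs{A}=0$, i.e.\ $A=\emptyset$, so $g\in\gen{\emptyset}=\set{e}$ and $g=e$. For \eqref{item: invariant under taking inverses}, I would observe that for any subgroup $H$ of $G$, $g\in H$ if and only if $g^{-1}\in H$; applying this with $H=\gen{A}$ shows that $\set{A\subseteq S : g\in\gen{A}}$ and $\set{A\subseteq S : g^{-1}\in\gen{A}}$ are literally the same collection of subsets, so their minimum cardinalities coincide and $\norm{g^{-1}}=\norm{g}$.

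For subadditivity \eqref{item: subadditivity}, the key step is: choose $A\subseteq S$ realizing $\norm{g}=\abs{A}$ with $g\in\gen{A}$, and $B\subseteq S$ realizing $\norm{h}=\abs{B}$ with $h\in\gen{B}$. Then $A\cup B\subseteq S$, and since $\gen{A}\subseteq\gen{A\cup B}$ and $\gen{B}\subseteq\gen{A\cup B}$, both $g$ and $h$ lie in $\gen{A\cup B}$, hence so does $gh$. Therefore $\norm{gh}\leq\abs{A\cup B}\leq\abs{A}+\abs{B}=\norm{g}+\norm{h}$, where the middle inequality is the standard bound $\abs{A\cup B}\leq\abs{A}+\abs{B}$ (valid for finite sets, and the sets $A,B$ are finite because they are minimizers).

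**The main subtlety** — such as it is — lies in the boundary behavior at the empty set: one must be careful that $\gen{\emptyset}=\set{e}$ (rather than $\emptyset$) for the identity case to work out, and that the standing convention $e\notin S$ does not interfere (it does not, since we never need a singleton containing $e$). Otherwise this is a routine verification; the genuinely interesting content of the paper lies in the later comparison with the word metric, not here.
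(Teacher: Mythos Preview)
Your proof is correct and follows essentially the same route as the paper: the paper dismisses items \eqref{item: positivity} and \eqref{item: invariant under taking inverses} as ``straightforward'' and then proves item \eqref{item: subadditivity} exactly as you do, by choosing minimizing subsets for $g$ and $h$ and observing that $gh$ lies in the subgroup generated by their union, so $\norm{gh}\leq\abs{A\cup B}\leq\abs{A}+\abs{B}$. Your explicit handling of $\gen{\emptyset}=\set{e}$ for the identity case is a welcome clarification the paper leaves implicit.
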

\begin{proof}
The proofs of items \ref{item: positivity} and \ref{item: invariant under taking inverses} are straightforward. To prove item \ref{item: subadditivity}, let $g, h\in G$. Then $g\in \gen{T}$ and $h\in \gen{R}$, where $T$ and $R$ are subsets of $S$ such that $\norm{g} = \abs{T}$ and $\norm{h} = \abs{R}$. Note that $T\cup R$ is a finite subset of $S$ and that $gh\in\gen{T\cup R}$. Hence, $\norm{gh}\leq \abs{T\cup R}\leq \abs{T}+\abs{R} = \norm{g}+\norm{h}$.
\end{proof}

It follows from Theorem \ref{thm: group-norm induced by cardinal metric} that the cardinal norm of a generated group $G$ induces a metric given by
\begin{equation}\label{eqn: cardinal metric}
d_C(g, h) = \norm{g^{-1}h},\qquad g,h\in G,
\end{equation}
called the {\it cardinal metric}, and so $G$ becomes a metric space. This gives another way to define a geometric structure on a (finitely) generated group. One of the most important geometric structures defined on a finitely generated group is the word metric, of course. By  definition, $d_C(g, h)$ equals the smallest cardinality of a subset $A$ of $S$ such that $g^{-1}h\in\gen{A}$, which justifies the use of the term \qt{cardinal}. It will be apparent that the structure of a generated group depends on its diameter with respect to the cardinal metric as well as the word metric. 

\begin{remark}
Throughout the remainder of this article, the word and cardinal metrics mentioned in the same place are induced by the same given generating set unless stated otherwise.
\end{remark}

\begin{lemma}\label{lem: upper bound for cardinal metric}
If $(G, S)$ is a finitely generated group, then 
\begin{equation}
d_C(g, h)\leq \abs{S}
\end{equation}
for all $g, h\in G$.
\end{lemma}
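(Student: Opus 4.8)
The plan is simply to unwind the definitions, since the bound is an immediate consequence of how the cardinal norm is defined. By \eqref{eqn: cardinal metric} we have $d_C(g, h) = \norm{g^{-1}h}$, where $\norm{\cdot}$ is the cardinal norm of \eqref{eqn: cardinal norm}. So it suffices to prove that $\norm{x}\leq \abs{S}$ for every $x\in G$, and then specialize to $x = g^{-1}h$.

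To establish $\norm{x}\leq\abs{S}$, I would observe that $G = \gen{S}$, so in particular $x\in\gen{S}$. Thus the set $A = S$ is an admissible choice in the minimum defining $\norm{x}$ in \eqref{eqn: cardinal norm}: it satisfies $A\subseteq S$ and $x\in\gen{A}$. Since $S$ is finite, $\abs{S}$ is a well-defined natural number (or $0$ if $G$ is trivial), and therefore $\norm{x}\leq\abs{A} = \abs{S}$. Taking $x = g^{-1}h$ yields $d_C(g, h)\leq\abs{S}$ for all $g, h\in G$.

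There is no real obstacle here; the only point worth flagging is the role of finiteness of $S$, which is exactly what makes $\abs{S}$ a meaningful numerical bound (for infinite $S$ the minimum in \eqref{eqn: cardinal norm} still makes sense as a cardinal, but the inequality carries no metric content). For context one could also note that the bound is sharp: in the group $S_3$ with $S = \set{(1\ 2),\,(1\ 2\ 3)}$ one has $\norm{(1\ 3)}_S = 2 = \abs{S}$, so equality can occur.
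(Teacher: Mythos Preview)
Your proof is correct and follows exactly the same idea as the paper's one-line argument, namely that $g^{-1}h\in\gen{S}$ makes $S$ itself an admissible set in the minimum defining $\norm{g^{-1}h}$. Your write-up is simply a more explicit unwinding of that observation, together with the extra (and apt) remark on sharpness.
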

\begin{proof}
The lemma follows from the fact that $a\in\gen{S}$ for all $a\in G$.
\end{proof}

The following theorem shows that every finitely generated group has finite \mbox{diameter} with respect to the cardinal metric. Furthermore, there is an example of a generated group of infinite diameter.

\begin{theorem}\label{thm: sufficient condition to be finite diameter}
Let $(G, S)$ be a generated group. If $S$ contains a finite subset that generates $G$, then $(G, d_C)$ is of finite diameter.
\end{theorem}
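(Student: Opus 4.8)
The plan is to reduce the statement directly to the definition of the cardinal norm, essentially upgrading Lemma \ref{lem: upper bound for cardinal metric} from the full generating set $S$ to a finite generating subset of it.

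First I would invoke the hypothesis to fix a finite set $S_0\subseteq S$ with $\gen{S_0} = G$. Then, for an arbitrary $a\in G$, the containment $a\in G = \gen{S_0}$ together with $S_0\subseteq S$ shows that $S_0$ is admissible in the minimum defining $\norm{a}$ in \eqref{eqn: cardinal norm}; hence $\norm{a}\le\abs{S_0}$. In particular the cardinal norm is bounded above by the constant $\abs{S_0}$ on all of $G$, which is exactly the observation behind Lemma \ref{lem: upper bound for cardinal metric}, now applied to $S_0$ rather than to $S$.

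Next I would pass from the norm to the metric: for any $g, h\in G$ we have $g^{-1}h\in G$, so $d_C(g, h) = \norm{g^{-1}h}\le\abs{S_0}$ by \eqref{eqn: cardinal metric} and the bound just obtained. Taking the supremum over all $g, h\in G$ gives $\diam{G, d_C}\le\abs{S_0}<\infty$, which is the claim.

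I do not expect any serious obstacle here; the content is simply that the cardinal norm only \qt{sees} finite subsets of $S$, so the presence of arbitrarily many superfluous generators in $S$ cannot force the diameter to be infinite once a finite generating subset is available. By contrast, the promised example of a generated group of infinite diameter must use a generating set admitting no finite generating subset — for instance the standard basis of a free abelian group of countably infinite rank — and will require exhibiting elements whose cardinal norm grows without bound, which is a genuinely different argument.
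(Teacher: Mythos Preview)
Your proposal is correct and follows essentially the same route as the paper: fix a finite generating subset $T\subseteq S$, observe (as in Lemma~\ref{lem: upper bound for cardinal metric}) that $d_C(g,h)\le\abs{T}$ for all $g,h\in G$, and conclude that the diameter is finite. The only difference is that you spell out the norm bound $\norm{a}\le\abs{S_0}$ explicitly before passing to the metric, whereas the paper absorbs this into the reference to the lemma.
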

\begin{proof}
Suppose that $T$ is a finite subset of $S$ such that $\gen{T} = G$. As in Lemma \ref{lem: upper bound for cardinal metric}, $d_C(g, h)\leq \abs{T}$ for all $g, h\in G$. Hence, $\sup{\cset{d_C(g,h)}{g, h\in G}} < \infty$ and so $\diam{G, d_C}$ is finite.
\end{proof}

The converse to Theorem \ref{thm: sufficient condition to be finite diameter} does not hold. For example, $(\R, \R)$ is a generated group and $(\R, d_C)$ is of finite diameter since $d_C(x, y)\leq 1$ for all $x, y\in \R$. However, $\R$ dose not contain a finite subset that generates $\R$.

\begin{example}
Let $F$ be a free abelian group with infinite countable basis $B = \cset{b_i}{i\in\N}$ (For example, $F$ can be chosen as the group of all functions from $\N$ to $\Z$ with finitely many nonzero values under pointwise addition). For each $n\in\N$, define $s_n = b_1+b_2+\cdots+b_n$. Then $s_n\in\gen{b_1, b_2,\ldots, b_n}$ and so $d_C(0, s_n)\leq n$. 
Suppose to the contrary that $d_C(0, s_n) = m < n$. Then there are distinct elements $c_1, c_2, \ldots, c_m\in B$ such that $s_n\in\gen{c_1, c_2,\ldots, c_m}$. Hence,
$$
b_1+b_2+\cdots+b_n = k_1c_1 + k_2c_2+\cdots +k_mc_m
$$
for some $k_1, k_2,\ldots, k_m\in\Z$. Since $m<n$, the previous equation is a contradiction for $b_1, b_2,\ldots, b_n, c_1, c_2, \ldots, c_m$ are basis elements. This proves that $d_C(0, s_n) = n$. It follows that $\sup{\cset{d_C(x, y)}{x, y\in F}} = \infty$ and so $(F, d_C)$ is of infinite diameter.
\end{example}

\subsection{Geometric structures}
Let $(G, S)$ be a generated group. It is not difficult to check that the following are classes  of (surjective) isometries of $G$ with respect to the cardinal metric:
\begin{itemize}
\item the left multiplication maps $L_g\colon h\mapsto gh$;
\item the automorphisms $\tau$ of $G$ with the property that $\norm{\tau(g)} = \norm{g}$ for all $g\in G$;
\item the automorphisms $\tau$ of $G$ with the property that $\tau(S) = S$.
\end{itemize}
An immediate consequence of the previous result is that the space $(G, d_C)$ is homo-geneous; that is, if $x$ and $y$ are arbitrary points of $G$, then there is an isometry $T$ of $(G, d_C)$ for which $T(x) = y$. In fact, $T = L_{yx^{-1}}$ is the desired isometry.

As mentioned previously, the cardinal metric depends on its generating set. However, in the case of finitely generated groups, the cardinal metrics are unique up to bi-Lipschitz equivalence, as shown in the following theorem.

\begin{theorem}
Let $G$ be a group with finite generating sets $S$ and $T$ and let $d_S$ and $d_T$ be the cardinal metrics induced by $S$ and $T$, respectively. Every injective self-map of $G$ is bi-Lipschitz. In particular, every permutation of $G$ is a bi-Lipschitz equivalence and so $(G, d_S)$ and $(G, d_T)$ are bi-Lipschitz equivalent.
\end{theorem}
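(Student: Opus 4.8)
\textbf{Strategy.} The whole point is that a finitely generated group has \emph{bounded} diameter with respect to a cardinal metric (Lemma~\ref{lem: upper bound for cardinal metric} / Theorem~\ref{thm: sufficient condition to be finite diameter}), while at the same time every cardinal metric is \emph{bounded below by $1$} off the diagonal, because the cardinal norm takes nonnegative integer values and vanishes only at the identity (Theorem~\ref{thm: group-norm induced by cardinal metric}\,\ref{item: positivity}). Thus $d_S$ and $d_T$ take values in the finite set $\set{0,1,\ldots,\max\set{\abs{S},\abs{T}}}$, and on distinct points they are bounded between $1$ and $\abs{S}$, resp.\ $1$ and $\abs{T}$. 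Any injective map then automatically distorts distances by at most a bounded factor in both directions.

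\textbf{Key steps.} First I would record the two-sided estimate: for all $x,y\in G$ with $x\ne y$ one has $1\le d_S(x,y)\le\abs{S}$ and $1\le d_T(x,y)\le\abs{T}$. The upper bounds are Lemma~\ref{lem: upper bound for cardinal metric} applied to $S$ and to $T$; the lower bounds hold because $x^{-1}y\ne e$ forces $\norm{x^{-1}y}\ge 1$, since $\norm{x^{-1}y}=\abs{A}$ for some nonempty $A\subseteq S$ (indeed $\gen{\emptyset}=\set{e}$). Second, let $f\colon G\to G$ be injective and let $g,h\in G$. If $g=h$ then $d_S(g,h)=0=d_T(f(g),f(h))$ and there is nothing to check. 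If $g\ne h$, then $f(g)\ne f(h)$ by injectivity, so applying the estimates above to both pairs gives
\begin{equation}
\frac{1}{\abs{S}}\,d_S(g,h)\;\le\;1\;\le\;d_T(f(g),f(h))\;\le\;\abs{T}\;\le\;\abs{T}\,d_S(g,h).
\end{equation}
Setting $K=\max\set{\abs{S},\abs{T}}$ yields $\tfrac1K d_S(g,h)\le d_T(f(g),f(h))\le K\,d_S(g,h)$ for all $g,h\in G$, i.e.\ $f\colon (G,d_S)\to(G,d_T)$ is bi-Lipschitz.

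\textbf{Consequences.} Applying this with $S=T$ gives that every injective self-map of $G$ is bi-Lipschitz. If $f$ is a permutation of $G$, then both $f$ and $f^{-1}$ are injective, so $f\colon(G,d_S)\to(G,d_T)$ is a bijective bi-Lipschitz map with bi-Lipschitz inverse, i.e.\ a bi-Lipschitz equivalence. Finally, taking $f=\mathrm{id}_G$ exhibits $(G,d_S)$ and $(G,d_T)$ as bi-Lipschitz equivalent.

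\textbf{Main obstacle.} Honestly there is little obstruction here: once one sees that finite generation collapses the metric to a bounded one, the only thing that could fail is degeneracy of the lower bound, and that is ruled out by integrality of the norm. The delicacy, if any, is purely in phrasing: one must be careful to state the distortion inequality so that it also holds trivially on the diagonal, and to note that injectivity is exactly what is needed to keep $d_T(f(g),f(h))\ge 1$.
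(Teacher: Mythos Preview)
Your proof is correct and follows essentially the same approach as the paper: both arguments combine the upper bound $d_C\le\abs{\text{generating set}}$ from Lemma~\ref{lem: upper bound for cardinal metric} with the integrality lower bound $d_C\ge 1$ off the diagonal, using injectivity to preserve distinctness. The only cosmetic differences are that the paper assumes without loss of generality $\abs{S}\le\abs{T}$ and takes the Lipschitz constant $\abs{T}+1$, whereas you use $K=\max\set{\abs{S},\abs{T}}$; your constant is in fact slightly sharper, but the argument is the same.
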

\begin{proof}
We may assume without loss of generality that $\abs{S}\leq \abs{T}$. Suppose that $f\colon G\to G$ is an injective map. Let $g, h\in G$ and let $g\ne h$. Then $f(g)\ne f(h)$ and so $d_S(f(g), f(h))>0$. By the defining property of $d_S$, $d_S(f(g), f(h))\geq 1$. By Lemma \ref{lem: upper bound for cardinal metric}, $d_T(g, h) < \abs{T}+1$. Hence, $\dfrac{1}{\abs{T}+1}d_T(g, h) < 1\leq d_S(f(g), f(h))$. By the same lemma, $d_S(f(g), f(h)) \leq \abs{S}\leq\abs{T} < (\abs{T}+1)d_T(g, h)$. This proves that
$$
\dfrac{1}{\abs{T}+1} d_T(g, h) \leq d_S(f(g), f(h)) \leq (\abs{T}+1)d_T(g, h)
$$
and so $f$ is bi-Lipschitz. The remaining part of the theorem is immediate.
\end{proof}

In some instances large scale geometry of a generated group is variant when its word metric is replaced by the cardinal metric, as we will see shortly. The next theorem gives a comparison between cardinal and word metrics.

\begin{theorem}\label{thm: comparison between word and cardinal metric}
Let $(G, S)$ be a generated group. Then
\begin{equation}\label{eqn: comparison of dW and dC}
d_C(g, h)\leq d_W(g, h)
\end{equation}
for all $g, h\in G$. Further, there is an example of a group such that the equality in \eqref{eqn: comparison of dW and dC} does not hold.
\end{theorem}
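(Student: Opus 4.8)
The plan is to prove the inequality \eqref{eqn: comparison of dW and dC} by a direct comparison of the two minimization problems defining $d_W$ and $d_C$, and then to produce a small cyclic group witnessing that the inequality can be strict. First I would reduce to a statement about norms: since $d_W(g, h) = \norm{g^{-1}h}_W$ and $d_C(g, h) = \norm{g^{-1}h}$, it is enough to show that $\norm{a}\leq \norm{a}_W$ for every $a\in G$, and then apply this with $a = g^{-1}h$. The case $a = e$ is immediate because both quantities are $0$, so I fix $a\neq e$.

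Next, set $n = \norm{a}_W$, so that $a = s_1^{\epsilon_1}s_2^{\epsilon_2}\cdots s_n^{\epsilon_n}$ for some $s_i\in S$ and $\epsilon_i\in\set{\pm 1}$. Let $A = \set{s_1, s_2,\ldots, s_n}$. Then $A$ is a finite subset of $S$ with $\abs{A}\leq n$, and $a\in\gen{A}$ by construction. By the definition \eqref{eqn: cardinal norm} of the cardinal norm, $\norm{a}\leq\abs{A}\leq n = \norm{a}_W$, which is exactly what was needed. This direction is routine; the only point worth flagging is that the generators $s_1,\ldots,s_n$ need not be distinct, so $\abs{A}$ may be strictly less than $n$ — and that slack is precisely the source of strict inequality.

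Finally, to see that equality can fail, I would take $G = \Z$ with the (one-element) generating set $S = \set{1}$. Since $\gen{\set{1}} = \Z$, every nonzero integer has cardinal norm $1$, so $d_C(0, 2) = 1$; on the other hand, $2$ is not of the form $\pm 1$, so $d_W(0, 2) = 2$. Hence $d_C(0,2) < d_W(0,2)$, and the equality in \eqref{eqn: comparison of dW and dC} does not hold in general. (A finite example, such as $\Z_5$ generated by a single element, works in the same way.) There is essentially no obstacle here: the inequality is forced by set-theoretic bookkeeping, and the counterexample only requires a generating set that is small in cardinality yet forces long words, which one generator of an infinite — or large enough finite — cyclic group delivers at once.
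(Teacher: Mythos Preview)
Your proof is correct and follows essentially the same route as the paper: both arguments fix a minimal-length word for $g^{-1}h$, observe that the set of generators appearing in it has cardinality at most $d_W(g,h)$, and conclude via the definition of the cardinal norm; the counterexample in the paper is likewise $\Z$ with $S=\{1\}$. The only cosmetic difference is that you first reduce to the norm inequality $\norm{a}\leq\norm{a}_W$, while the paper works directly with $d_C$ and $d_W$.
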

\begin{proof}
Let $g, h\in G$. If $g = h$, then $d_C(g, h) = 0 = d_W(g,h)$. Suppose that $g\ne h$ and that $d_W(g, h) = m$. Then there are elements $s_1, s_2, \ldots, s_m$ in $S$ such that $g^{-1}h = s_1^{\epsilon_1}s_2^{\epsilon_2}\cdots s_m^{\epsilon_m}$, where $\epsilon_i\in\set{\pm 1}$ for all $i = 1,2,\ldots, m$. Thus, $g^{-1}h\in \gen{s_1, s_2,\ldots, s_m}$ and so $\norm{g^{-1}h}\leq m$. Hence, $d_C(g, h)\leq  d_W(g, h)$.

For the remaining part of the theorem, consider the additive group $\Z$. Since $\Z = \gen{1}$, it follows that $\norm{k} = 1$ for all nonzero $k\in\Z$. Hence, $d_C(m, n) \leq 1$ for all $m, n\in\Z$. If $-m + n\geq 2$, then $d_W(m, n)\geq 2$. This proves that $d_C(m, n)<d_W(m, n)$ whenever $-m+n\geq 2$.
\end{proof}

\begin{theorem}\label{thm: dW and dC, infinite diameter}
Let $(G, S)$ be a finitely generated group. If $(G, d_W)$ is of infinite diameter, then $(G, d_W)$ and $(G, d_C)$ are not quasi-isometric.
\end{theorem}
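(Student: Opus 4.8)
The plan is to argue by contradiction, exploiting the fact that $(G,d_C)$ is bounded while $(G,d_W)$ is not. By Lemma \ref{lem: upper bound for cardinal metric}, finiteness of $S$ forces $d_C(a,b)\leq\abs{S}$ for all $a,b\in G$, so $\diam{G, d_C}\leq\abs{S}<\infty$; on the other hand, $(G,d_W)$ is assumed to have infinite diameter. Since a quasi-isometry cannot turn an unbounded space into a bounded one, no quasi-isometry between them can exist.

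Concretely, I would proceed as follows. Being quasi-isometric is a symmetric relation on metric spaces, so it suffices to suppose that some map $f\colon (G,d_W)\to (G,d_C)$ is a quasi-isometry: there are constants $\lambda\geq 1$ and $\varepsilon\geq 0$ with
\[
\frac{1}{\lambda}\,d_W(g,h)-\varepsilon\leq d_C(f(g),f(h))\leq \lambda\, d_W(g,h)+\varepsilon
\]
for all $g,h\in G$ (the coarse surjectivity condition will not even be needed). Next, apply Lemma \ref{lem: upper bound for cardinal metric} to the pair $f(g),f(h)$, obtaining $d_C(f(g),f(h))\leq\abs{S}$. Combining this with the left-hand inequality yields $\frac{1}{\lambda}d_W(g,h)-\varepsilon\leq\abs{S}$, that is, $d_W(g,h)\leq\lambda(\abs{S}+\varepsilon)$ for all $g,h\in G$. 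Hence $\diam{G, d_W}\leq\lambda(\abs{S}+\varepsilon)<\infty$, contradicting the hypothesis that $(G,d_W)$ has infinite diameter. Therefore $(G,d_W)$ and $(G,d_C)$ are not quasi-isometric.

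I do not expect a genuine obstacle here; essentially all of the content sits in Lemma \ref{lem: upper bound for cardinal metric}, and once the additive-error lower bound of a quasi-isometry is written down, finiteness of $\abs{S}$ does the rest. The only point requiring a little care is to use the correct definition of quasi-isometry and to observe that the coarse surjectivity clause plays no role in this direction of the argument; equivalently, one may simply cite the standard fact that having finite diameter is a quasi-isometry invariant, note that $(G,d_C)$ has finite diameter by the Lemma, and conclude.
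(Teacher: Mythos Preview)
Your proof is correct and follows essentially the same approach as the paper: both use Lemma \ref{lem: upper bound for cardinal metric} to bound $d_C$ by $\abs{S}$ and then observe that the lower inequality in the definition of a quasi-isometric embedding from $(G,d_W)$ to $(G,d_C)$ would force $d_W$ to be bounded, contradicting the infinite-diameter hypothesis. The paper phrases this as showing directly that no self-map can satisfy the quasi-isometry inequalities (picking $g,h$ with $d_W(g,h)>K(\abs{S}+c)$), while you phrase it as a contradiction, but the content is identical.
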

\begin{proof}
We show that there is no quasi-isometric embedding from $(G, d_W)$ to $(G, d_C)$. Let $T$ be a self-map of $G$. Let $K$ and $c$ be arbitrary positive constants. Since $\diam{G, d_W} =\sup{\cset{d_W(x, y)}{x, y\in G}}= \infty$ and $K(\abs{S}+c)$ is a constant, there must be points $g$ and $h$ in $G$ such that $d_W(g, h) > K(\abs{S}+c)$. It follows that $\dfrac{1}{K}d_W(g, h) - c > \abs{S}$. By Lemma \ref{lem: upper bound for cardinal metric}, $d_C(T(g), T(h)) \leq \abs{S}$. Hence, $$\dfrac{1}{K}d_W(g, h) - c > d_C(T(g), T(h)).$$ This proves that $T$ cannot define a quasi-isometric embedding and so $(G, d_W)$ and $(G, d_C)$ are not quasi-isometric. 
\end{proof}

\begin{theorem}\label{thm: bilipschitz equivanet, depending on diameter}
Let $G$ be a finitely generated group.
\begin{enumerate}
\item\label{item: infinite diameter} If $(G, d_W)$ is of infinite diameter, then $(G, d_W)$ and $(G, d_C)$ are not bi-Lipschitz equivalent.
\item\label{item: finite diameter} If $(G, d_W)$ is of finite diameter, then $(G, d_W)$ and $(G, d_C)$ are bi-Lipschitz equivalent. Therefore, they are quasi-isometric.
\end{enumerate}
\end{theorem}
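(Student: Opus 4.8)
For part~\ref{item: infinite diameter} I would argue by contraposition from Theorem~\ref{thm: dW and dC, infinite diameter}. A bi-Lipschitz equivalence with constant $\lambda$ is in particular a $(\lambda, 0)$-quasi-isometric embedding that is onto, hence a quasi-isometry; so bi-Lipschitz equivalent metric spaces are always quasi-isometric. Therefore, if $(G, d_W)$ has infinite diameter, Theorem~\ref{thm: dW and dC, infinite diameter} already shows that $(G, d_W)$ and $(G, d_C)$ are not quasi-isometric, and so a fortiori they are not bi-Lipschitz equivalent. No extra work is needed.

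For part~\ref{item: finite diameter} the plan is to show that the identity map $\mathrm{id}_G$ is itself a bi-Lipschitz equivalence $(G, d_W)\to(G, d_C)$. Put $D = \diam{G, d_W}$, which is finite by hypothesis, and set $\lambda = \max\set{1, D}$. The upper bound
\[
d_C(g, h)\leq d_W(g, h)\leq \lambda\, d_W(g, h),\qquad g, h\in G,
\]
is immediate from Theorem~\ref{thm: comparison between word and cardinal metric} together with $\lambda\geq 1$. For the matching lower bound: if $g = h$ both sides vanish, while if $g\neq h$ then $d_W(g, h)\leq D\leq\lambda$ and $d_C(g, h)\geq 1$ --- the latter because the cardinal norm takes nonnegative integer values and vanishes only at the identity by item~\ref{item: positivity} of Theorem~\ref{thm: group-norm induced by cardinal metric} --- so that $\frac{1}{\lambda}\,d_W(g, h)\leq 1\leq d_C(g, h)$. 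Combining the two estimates yields $\frac{1}{\lambda}\,d_W(g, h)\leq d_C(g, h)\leq\lambda\,d_W(g, h)$ for all $g, h\in G$, so $\mathrm{id}_G$ is a bijective bi-Lipschitz map; that is, $(G, d_W)$ and $(G, d_C)$ are bi-Lipschitz equivalent. The final assertion follows since this bi-Lipschitz equivalence is in particular a $(\lambda, 0)$-quasi-isometry.

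I do not expect a real obstacle here: the statement reduces entirely to the earlier Theorems~\ref{thm: dW and dC, infinite diameter} and~\ref{thm: comparison between word and cardinal metric} plus the elementary fact that both metrics are $\geq 1$ off the diagonal. The only points requiring a little care are (i) making explicit that bi-Lipschitz equivalence implies quasi-isometry, which is exactly what turns part~\ref{item: infinite diameter} into a one-line corollary, and (ii) the degenerate case $G = \set{e}$ (where $D = 0$), handled by using $\lambda = \max\set{1, D}$ rather than $\lambda = D$. One could alternatively remark that for a finitely generated group $(G, d_W)$ has finite diameter precisely when $G$ is finite, since each $d_W$-ball about $e$ is finite; then part~\ref{item: finite diameter} becomes an instance of the general fact that any two metrics on a finite set are bi-Lipschitz equivalent via the identity. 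The direct estimate above, however, is shorter and avoids this detour.
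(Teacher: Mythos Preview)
Your argument is correct and follows essentially the same route as the paper: part~\ref{item: infinite diameter} is deduced from Theorem~\ref{thm: dW and dC, infinite diameter} via the implication bi-Lipschitz $\Rightarrow$ quasi-isometric, and part~\ref{item: finite diameter} combines Theorem~\ref{thm: comparison between word and cardinal metric} with the fact that both metrics are integer-valued and bounded by the diameter. The only notable difference is that the paper proves the stronger statement that \emph{every} injective self-map $T\colon G\to G$ is a bi-Lipschitz embedding (with constant $K=\diam{G,d_W}$), whereas you work only with $\mathrm{id}_G$; your version suffices for the theorem as stated and has the side benefit of a cleaner upper bound (direct from $d_C\le d_W$) and an explicit handling of the trivial case $D=0$, which the paper's choice $K=D$ glosses over.
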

\begin{proof}\hfill

\eqref{item: infinite diameter} If $(G, d_W)$ is of infinite diameter, then by Theorem \ref{thm: dW and dC, infinite diameter}, $(G, d_W)$ and $(G, d_C)$ are not quasi-isometric and so they are not bi-Lipschitz equivalent. 

\eqref{item: finite diameter} Suppose that $(G, d_W)$ is of finite diameter and let $T$ be an injective self-map of $G$. We claim that $T$ is a bi-Lipschitz embedding. Set $K = \diam{G, d_W}$. Using Theorem \ref{thm: comparison between word and cardinal metric}, we obtain
$$
\dfrac{1}{K}d_W(g, h)\leq d_C(T(g), T(h)) \leq Kd_W(g, h)
$$
for all $g, h\in G$. Hence, $T$ is bi-Lipschitz. This implies that every permutation of $G$ is a bi-Lipschitz equivalence between $(G, d_W)$ and $(G, d_C)$. 
\end{proof}

By Theorem \ref{thm: bilipschitz equivanet, depending on diameter} \eqref{item: finite diameter}, if $(G, d_W)$ has finite diameter, then $(G, d_W)$ and $(G, d_C)$ are bi-Lipschitz equivalent. Therefore, the next obvious question is whether they are isometric. It turns out that they need not be isometric, in general. The following two examples support our claim.

\begin{example}\label{exm: isometric (G, dW) and (G, dC)}
Let $G$ be a finite group. Let $d_W$  and $d_C$ be the word and cardinal metrics with respect to $G$ itself. Note that $(G, d_W)$ is of finite diameter. In fact, if $g, h\in G$, then $d_W(g, h) = 1$ since $\Cay{G, G}$ is a complete graph and so $\set{g,h}$ is an edge in $\Cay{G, G}$. This implies that the diameter of $(G,d_W)$ equals $1$. The identity map on $G$ is easily seen to be an isometry between $(G, d_W)$ and $(G, d_C)$.
\end{example}

\begin{example}\label{exm: not isometric (G, dW) and (G, dC)}
Let $G$ be a cyclic group of finite order $n \geq 4$ with a generator $a$. Let $d_W$  and $d_C$ be the word and cardinal metrics induced by $\set{a}$ and let $T$ be a bijection from $G$ to itself. We claim that $T$ cannot define an isometry between $(G, d_W)$ and $(G, d_C)$. Since $T$ is surjective, there are elements $g$ and $h$ of $G$ such that $T(g) = e$ and $T(h) = a^2$. Note that $d_W(T(g), T(h)) = d_W(e, a^2) = 2$ since $e^{-1}a^2 = a^2$ is a word of length $2$ ($a^2\ne e$, $a^2\ne a$, and $a^2\ne a^{-1}$). Moreover, $d_C(g, h) \leq 1$ since $g^{-1}h\in G = \gen{a}$. Hence, $d_W(T(g), T(h))\ne d_C(g, h)$ and so $T$ is not an isometry. 
\end{example}

We close this section with a description of isometries between finitely \mbox{generated} groups equipped with word and cardinal metrics.

\begin{theorem}
Let $(G, S)$ be a finitely generated group. Every isometry from $(G, d_C)$ to $(G, d_W)$ is of the form 
$L_a\circ\tilde{T}$, where $a\in G$ and $\tilde{T}\colon (G, d_C)\to (G, d_W)$  is an isometry that preserves the group identity and $\tilde{{T}}(S)\subseteq S\cup S^{-1}$. Furthermore, $\tilde{T}$ is a nonexpansive mapping on $(G, d_W)$.
\end{theorem}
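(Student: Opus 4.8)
The plan is to peel off the left-translation part of a given isometry by recording where it sends the identity, and then to check the remaining three properties of the residual map one at a time, using only left-invariance of the word metric, the trivial computation of the cardinal norm on a single generator, and the comparison inequality $d_C\le d_W$ already established in Theorem~\ref{thm: comparison between word and cardinal metric}.

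Concretely, I would let $\Phi\colon (G, d_C)\to (G, d_W)$ be an isometry, put $a = \Phi(e)$, and set $\tilde{T} = L_{a^{-1}}\circ\Phi$. Since $d_W$ is left-invariant (immediate from its definition, exactly as for $d_C$), the map $L_{a^{-1}}$ is an isometry of $(G, d_W)$, so $\tilde{T}$ is again an isometry from $(G, d_C)$ to $(G, d_W)$; and by construction $\Phi = L_a\circ\tilde{T}$ with $\tilde{T}(e) = a^{-1}\Phi(e) = a^{-1}a = e$. This takes care of the first two required properties.

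For the condition $\tilde{T}(S)\subseteq S\cup S^{-1}$, the key observation is that $\norm{s}_S = 1$ for every $s\in S$: the inclusion $s\in\gen{\set{s}}$ gives $\norm{s}_S\le 1$, while $s\ne e$ (generating sets omit the identity) forces $\norm{s}_S\ge 1$ by Theorem~\ref{thm: group-norm induced by cardinal metric}\eqref{item: positivity}. Hence $d_C(e, s) = 1$, and applying the isometry $\tilde{T}$ together with $\tilde{T}(e) = e$ yields $\norm{\tilde{T}(s)}_W = d_W(e, \tilde{T}(s)) = 1$; by the very definition of the word norm this means $\tilde{T}(s) = t^{\epsilon}$ for some $t\in S$ and $\epsilon\in\set{\pm 1}$, i.e.\ $\tilde{T}(s)\in S\cup S^{-1}$.

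Finally, for the nonexpansiveness of $\tilde{T}$ viewed as a self-map of $(G, d_W)$, I would simply combine the fact that $\tilde{T}$ is an isometry from the cardinal metric to the word metric with Theorem~\ref{thm: comparison between word and cardinal metric}: for all $g, h\in G$ we get $d_W(\tilde{T}(g), \tilde{T}(h)) = d_C(g, h)\le d_W(g, h)$, which is precisely $1$-Lipschitz. There is no genuine obstacle here; the argument is an organized unwinding of definitions, and the only step that requires a moment's thought is the identification $\norm{s}_S = 1$, which is exactly what makes the membership $\tilde{T}(S)\subseteq S\cup S^{-1}$ go through.
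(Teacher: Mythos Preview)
Your proposal is correct and follows essentially the same route as the paper: set $a=\Phi(e)$, define $\tilde{T}=L_{a^{-1}}\circ\Phi$, use left-invariance of $d_W$ to see that $\tilde{T}$ is an isometry fixing $e$, compute $d_C(e,s)=1$ to force $\tilde{T}(s)\in S\cup S^{-1}$, and invoke Theorem~\ref{thm: comparison between word and cardinal metric} for nonexpansiveness. The only difference is cosmetic: you spell out the two-sided estimate giving $\norm{s}_S=1$, whereas the paper simply asserts $d_C(e,s)=1$.
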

\begin{proof}
Denote by $e$ the identity of $G$ and let $T\colon (G, d_C)\to (G, d_W)$ be an isometry. Define $\tilde{T} = L_{T(e)^{-1}}\circ T$. Then $\tilde{T}(e) = T(e)^{-1}T(e) = e$. Let $g, h\in G$. Since $L_{T(e)^{-1}}$ is an isometry of $(G, d_W)$, it follows that
$$
d_W(\tilde{T}(g), \tilde{T}(h)) = d_W(T(g), T(h)) = d_C(g, h).
$$
Hence, $\tilde{T}$ is an isometry from $(G, d_C)$ to $(G, d_W)$. Let $s\in S$. Then
$$
1 = d_C(e, s) = d_W(\tilde{T}(e), \tilde{T}(s)) = d_W(e, \tilde{T}(s)).
$$
This implies that $\tilde{T}(s)$ can be expressed as a word of length $1$; that is, $\tilde{T}(s) = t^{\epsilon}$ for some $t\in S$ and $\epsilon\in\set{\pm 1}$. This proves that $\tilde{T}(S)\subseteq S\cup S^{-1}$. 

Let $g,h\in G$. By Theorem \ref{thm: comparison between word and cardinal metric},
$$
d_W(\tilde{T}(g), \tilde{T}(h)) = d_C(g, h) \leq d_W(g, h)
$$
and so $\tilde{T}$ is nonexpansive.
\end{proof}

\subsection{Topological structures}
Let $G$ be a generated group. It is clear that the distance between two arbitrary points in $G$ measured by the cardinal metric is a nonnegative integer and so the cardinal metric induces the {\it discrete} topology on $G$ (the same is true for the word metric). Actually, the open ball centered at $x$ of radius $1/2$ is the singleton set $\set{x}$. This implies that if $G$ is finite, then $(G, d_C)$ is compact and hence is complete and totally bounded. In contrast, if $G$ is infinite, then $(G, d_C)$ is neither compact nor totally bounded. Nevertheless, it is complete since any Cauchy sequence in $(G, d_C)$ must become constant at some point. It is well known that any finitely generated group is countable. Therefore, if $G$ is a finitely generated group, then $(G, d_C)$ is separable since $G$ is a countable dense subset of itself. In this case, $(G, d_C)$ forms a {\it Polish} metric space \cite{GBPTC1991} (and even a Polish group).

\section{Isometries of cardinal metrics}
In this section, we give an alternative description of cardinal metrics by using Cayley color graphs. This leads to a remarkable connection between cardinal metrics and color-permuting automorphisms of Cayley graphs of generated groups. More precisely, cardinal metrics are invariant under color-permuting automorphisms (and hence also color-preserving automorphisms).

Let $(G, S)$ be a generated group. To elements of $S$, we can associate distinct colors, labeled by their names. The (right) {\it Cayley color digraph} of $G$ with \mbox{respect} to $S$, denoted by $\dcCay{G, S}$, is a digraph with $G$ as the vertex set and for all $g, h\in G$, there is an arc  from $g$ to $h$ if and only if $h = gc$ for some $c\in S$. In this case, we say that $(g, h)$ is an arc with color $c$. Recall that an (undiretced) {\it path} from $g$ to $h$ in $\dcCay{G, S}$ is an alternating sequence of vertices and arcs, $g = g_0, e_1, g_1, e_2, g_2,\ldots, g_{n-1}, e_n, g_n = h$, such that $e_i\in\set{(g_{i-1}, g_i), (g_i, g_{i-1})}$ for all $i = 1,2,\ldots, n$.

\begin{theorem}[An alternative description of cardinal metrics]\label{thm characterization of cardinal metric}
Let $(G, S)$ be a generated group and let $g$ and $h$ be distinct elements of $G$. Then $d_C(g, h)$ equals the minimum number of colors associated with a path connecting $g$ and $h$ in $\dcCay{G, S}$.
\end{theorem}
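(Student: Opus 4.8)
The plan is to establish the two inequalities $d_C(g,h)\le m$ and $m\le d_C(g,h)$ separately, where $m$ denotes the minimum number of colors appearing on a path connecting $g$ and $h$ in $\dcCay{G,S}$. Both rest on one translation dictionary: an undirected edge of color $c$ between two vertices of $\dcCay{G,S}$ corresponds to right-multiplication by $c$ or $c^{-1}$, according to its orientation. The whole proof is the bookkeeping that counting colors along a path matches counting distinct generators used in a product expressing $g^{-1}h$.

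For $d_C(g,h)\le m$, I would take a path $g = g_0, e_1, g_1,\ldots, e_n, g_n = h$ realizing the minimum and let $A\subseteq S$ be the set of colors occurring on $e_1,\ldots,e_n$, so $\abs{A} = m$. Walking along the path, the edge $e_i$ is either the arc $(g_{i-1},g_i)$, whence $g_i = g_{i-1}c_i$ for its color $c_i$, or the arc $(g_i,g_{i-1})$, whence $g_{i-1} = g_ic_i$, i.e. $g_i = g_{i-1}c_i^{-1}$. In either case $g_i = g_{i-1}c_i^{\epsilon_i}$ with $\epsilon_i\in\set{\pm 1}$ and $c_i\in A$; telescoping these relations gives $g^{-1}h = c_1^{\epsilon_1}c_2^{\epsilon_2}\cdots c_n^{\epsilon_n}\in\gen{A}$. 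Hence $\norm{g^{-1}h}\le\abs{A} = m$, that is, $d_C(g,h)\le m$.

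For the reverse inequality, I would pick $A\subseteq S$ with $g^{-1}h\in\gen{A}$ and $\abs{A} = d_C(g,h)$, and write $g^{-1}h = s_1^{\epsilon_1}s_2^{\epsilon_2}\cdots s_k^{\epsilon_k}$ with each $s_i\in A$ and $\epsilon_i\in\set{\pm 1}$. Setting $g_0 = g$ and $g_i = g_{i-1}s_i^{\epsilon_i}$, the same dictionary shows that consecutive vertices $g_{i-1},g_i$ are joined by an arc of $\dcCay{G,S}$ of color $s_i$ (forwards if $\epsilon_i = 1$, backwards if $\epsilon_i = -1$), while $g_k = g(g^{-1}h) = h$. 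This is a path from $g$ to $h$ whose color set is contained in $A$, so it uses at most $\abs{A} = d_C(g,h)$ colors; therefore $m\le d_C(g,h)$. Combining the two inequalities yields the claimed equality. Note that such a path exists at all precisely because $g^{-1}h\in\gen{S}$, which holds since $S$ generates $G$; this is what makes the minimum in the statement well-defined.

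I do not anticipate a genuine obstacle. The only point deserving a line of care is that a path may repeat vertices or colors, so the induced product need not be reduced; this is harmless because both quantities being compared are minima over cardinalities of color sets (respectively generator sets), not over path lengths or word lengths.
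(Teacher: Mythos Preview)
Your proposal is correct and follows essentially the same two-inequality strategy as the paper's proof: translate a color-minimizing path into a generating subset to get $d_C(g,h)\le m$, and translate a cardinality-minimizing subset into a path to get $m\le d_C(g,h)$. The only cosmetic difference is that the paper, before building the path $y_0,\ldots,y_k$, first passes to an expression for $g^{-1}h$ containing no subword equal to the identity (so that the resulting vertex sequence is genuinely without repetitions), whereas you instead remark at the end that repeated vertices are harmless for the color count; both handle the same non-issue.
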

\begin{proof}
Let $n$ be the minimum number of colors associated with a path connecting $g$ and $h$ in $\dcCay{G, S}$. Then there is a path $g = x_0, x_1,\ldots, x_m = h$ in $\dcCay{G, S}$ with $c_1, c_2,\ldots, c_n$ as its colors. It follows that $g^{-1}h = a_1^{\epsilon_1}a_2^{\epsilon_2}\cdots a_m^{\epsilon_m}$, where $a_i\in\set{c_1, c_2, \ldots, c_n}$ and $\epsilon_i\in\set{\pm 1}$ for all $i = 1,2,\ldots, m$. This implies that $g^{-1}h\in\gen{c_1, c_2,\ldots, c_n}$. Hence, $d_C(g, h)\leq n$.

By definition, there is a subset $T$ of $S$ with $\abs{T} = d_C(g,h)$ such that $g^{-1}h\in\gen{T}$. It follows that $g^{-1}h = b_1^{\epsilon_1}b_2^{\epsilon_2}\cdots b_k^{\epsilon_k}$, where $b_i\in T$ and $\epsilon_i\in\set{\pm 1}$ for all $i = 1,2,\ldots, k$. Further, we may assume that $b_1^{\epsilon_1}b_2^{\epsilon_2}\cdots b_k^{\epsilon_k}$ does not contain a subword equal to the group identity. Define $y_0 = g$ and $y_i = y_{i-1}b_i^{\epsilon_i}$. Then $g = y_0, y_1, y_2,\ldots, y_k = h$ is a path connecting $g$ and $h$ in $\dcCay{G, S}$ and so the \mbox{number} of colors associated with this path, say $\ell$, does not exceed $\abs{T}$. It follows from the minimality of $n$ that $n\leq \ell\leq \abs{T} = d_C(g, h)$. Thus, $d_C(g, h)=n$.
\end{proof}

Let $(G, S)$ be a generated group. Denote by $\aut{\dcCay{G, S}}$ the group of graph automorphisms of $\dcCay{G, S}$. 

\begin{definition}[Color-permuting automorphisms, \cite{MR3546658}]\label{def: color-permuting automorphisms}
Let $(G, S)$ be a generated group. A map $\alpha$ in $\aut{\dcCay{G, S}}$ is called a {\it color-permuting} automorphism of $\dcCay{G, S}$ if there exists a permutation $\sigma\in\sym{S}$ such that $(g, h)$ has color $c$ if and only if $(\alpha(g), \alpha(h))$ has color $\sigma(c)$ for all $g, h$ in $G$. 
\end{definition}
A natural characterization of color-permuting automorphisms of $\dcCay{G, S}$ is presented in the following theorem.

\begin{theorem}[p. 66, \cite{MR1206550}]\label{thm: characterization of color-permuting automorphism}
Let $(G, S)$ be a generated group and let $\alpha$ be an automorphism of $\dcCay{G, S}$. Then $\alpha$ is a color-permuting automorphism of $\dcCay{G, S}$ if and only if there exists a permutation $\sigma\in\sym{S}$ such that $\alpha(gc) = \alpha(g)\sigma(c)$
for all $g\in G$ and $c\in S$.
\end{theorem}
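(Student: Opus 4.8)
The plan is to unravel Definition~\ref{def: color-permuting automorphisms} using the elementary observation that, in $\dcCay{G, S}$, an arc carries color $c\in S$ precisely when it has the form $(g, gc)$; equivalently, $(g, h)$ is an arc with color $c$ if and only if $h = gc$. I would also record that, because $\sigma$ ranges over $\sym{S}$, every color in $S$ equals $\sigma(c)$ for a unique $c\in S$, so the biconditional in Definition~\ref{def: color-permuting automorphisms} behaves symmetrically.

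For the forward implication I would start from a color-permuting $\alpha$ with witnessing permutation $\sigma\in\sym{S}$, fix $g\in G$ and $c\in S$, and note that $(g, gc)$ is an arc with color $c$; the defining property of $\alpha$ then says that $(\alpha(g), \alpha(gc))$ is an arc with color $\sigma(c)$, which by the description of arcs above is exactly the equation $\alpha(gc) = \alpha(g)\sigma(c)$. Since $g$ and $c$ are arbitrary, this gives the desired identity with the same $\sigma$.

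For the converse I would assume $\alpha\in\aut{\dcCay{G, S}}$ together with a permutation $\sigma\in\sym{S}$ satisfying $\alpha(gc) = \alpha(g)\sigma(c)$ for all $g\in G$ and $c\in S$, and then verify that $(g, h)$ has color $c$ if and only if $(\alpha(g), \alpha(h))$ has color $\sigma(c)$. One direction is immediate: if $h = gc$, then $\alpha(h) = \alpha(gc) = \alpha(g)\sigma(c)$, so $(\alpha(g),\alpha(h))$ has color $\sigma(c)$. For the other direction, $(\alpha(g),\alpha(h))$ having color $\sigma(c)$ means $\alpha(h) = \alpha(g)\sigma(c) = \alpha(gc)$, and together with the injectivity of $\alpha$ on the vertex set $G$ this yields $h = gc$, i.e. $(g,h)$ has color $c$. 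Hence $\alpha$ is a color-permuting automorphism.

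The argument is essentially a direct translation between two bookkeeping devices, so I do not expect a genuine obstacle. The single point that needs care is in the converse: one must invoke the bijectivity of $\alpha$ as a graph automorphism to deduce $h = gc$ from $\alpha(h) = \alpha(gc)$. It is also worth flagging that the hypothesis that $\sigma$ is an honest permutation of $S$ (and not merely a self-map) is exactly what makes the two halves of the biconditional match up, since it guarantees that every color occurring on an arc of the image has the form $\sigma(c)$.
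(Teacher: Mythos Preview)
Your argument is correct: both implications are obtained by the straightforward translation between the statement ``$(g,h)$ has color $c$'' and the equation $h=gc$, together with the injectivity of $\alpha$ for the reverse direction of the biconditional. Note, however, that the paper does not give its own proof of this theorem; it is quoted from \cite{MR1206550} and stated without proof, so there is nothing to compare against beyond observing that your definition-unraveling argument is exactly the expected one.
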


Clearly, the color-permuting automorphisms of $\dcCay{G, S}$ corresponding to the identity permutation of $S$ are precisely the {\it color-preserving} automorphisms of $\dcCay{G, S}$. It is a standard result in graph theory that the group of all color-preserving automorphisms of $\dcCay{G, S}$, denoted by $\caut{\dcCay{G, S}}$, is iso-morphic to $G$ \cite{RFHVG1939}. In fact,
\begin{equation}
\caut{\dcCay{G, S}} = \cset{L_a}{a\in G},\qquad L_a\colon g\mapsto ag,
\end{equation}
and $\cset{L_a}{a\in G}$ is isomorphic to $G$ by the famous Cayley theorem in abstract algebra; see, for instance, Theorem 7.12 of \cite{MR2014620}. 

From the characterization of a cardinal metric described in Theorem \ref{thm characterization of cardinal metric}, we immediately obtain a class of isometries of $(G, d_C)$:

\begin{theorem}
If $(G, S)$ is a generated group, then the color-preserving automorphisms of $\dcCay{G, S}$  are isometries of $G$ with respect to the cardinal metric.
\end{theorem}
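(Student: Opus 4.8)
The plan is to read the result off from material already in hand: the path description of the cardinal metric (Theorem~\ref{thm characterization of cardinal metric}), the identification $\caut{\dcCay{G, S}} = \cset{L_a}{a\in G}$ recalled just above, and the observation made earlier that each left multiplication $L_a$ is an isometry of $(G, d_C)$. I would give the quick argument first and then a second, more intrinsic one that does not invoke the structure theorem for $\caut{\dcCay{G,S}}$.

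For the quick argument, let $\alpha\in\caut{\dcCay{G, S}}$. By the recalled structure theorem $\alpha = L_a$ for some $a\in G$, so for all $g, h\in G$,
$$
d_C(\alpha(g), \alpha(h)) = \norm{(ag)^{-1}(ah)} = \norm{g^{-1}h} = d_C(g, h);
$$
since $\alpha$ is a bijection, it is a surjective isometry of $(G, d_C)$.

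For the intrinsic argument, fix $\alpha\in\caut{\dcCay{G, S}}$ and distinct $g, h\in G$. Because $\alpha$ is a graph automorphism it carries any path from $g$ to $h$ onto a path from $\alpha(g)$ to $\alpha(h)$, and because $\alpha$ preserves the color of every arc, the image path uses exactly the same set of colors as the original. Hence, by the minimality in Theorem~\ref{thm characterization of cardinal metric}, $d_C(\alpha(g), \alpha(h)) \le d_C(g, h)$; applying this to $\alpha^{-1}$, which is again a color-preserving automorphism, gives the reverse inequality, so $d_C(\alpha(g), \alpha(h)) = d_C(g, h)$. There is no real obstacle here: the statement is a direct corollary of the preceding development, and the only point that deserves a word of justification is that $\alpha^{-1}$ is color-preserving, which is clear since $\alpha$ induces a color-preserving bijection on the arc set of $\dcCay{G, S}$.
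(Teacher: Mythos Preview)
Your proposal is correct. The paper does not give a written proof at all; it simply declares the theorem an immediate consequence of Theorem~\ref{thm characterization of cardinal metric}, which is precisely your second, intrinsic argument. Your first argument via the identification $\caut{\dcCay{G,S}}=\cset{L_a}{a\in G}$ is an equally valid shortcut that the paper does not take here, though it has already noted that each $L_a$ is an isometry; either route settles the claim in one line.
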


Denote by $\paut{\dcCay{G, S}}$ the group of {\it color-permuting} automorphisms of $\dcCay{G, S}$. The well-known characterization of $\paut{\dcCay{G, S}}$ is given by
\begin{equation}
\paut{\dcCay{G, S}} = \cset{L_a\circ \tau}{a\in G, \tau\in\aut{G, S}},
\end{equation}
where $\aut{G, S}$ denotes the group of automorphisms $\tau$ of $G$ such that $\tau(S) = S$; see, for instance, \cite[Lemma 2.1]{MR1206550}. In view of Theorem \ref{thm characterization of cardinal metric}, we obtain another class of isometries of $(G, d_C)$.

\begin{theorem}
If $(G, S)$ is a generated group, then the color-permuting automorphisms of $\dcCay{G, S}$  are isometries of $G$ with respect to the cardinal metric.
\end{theorem}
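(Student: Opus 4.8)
The plan is to combine the path description of the cardinal metric in Theorem~\ref{thm characterization of cardinal metric} with the defining property of a color-permuting automorphism. Let $\alpha\in\paut{\dcCay{G,S}}$, and let $\sigma\in\sym{S}$ be a permutation witnessing that $\alpha$ is color-permuting, so that $(g,h)$ is an arc of color $c$ if and only if $(\alpha(g),\alpha(h))$ is an arc of color $\sigma(c)$, for all $g,h\in G$ and $c\in S$. Fix distinct $g,h\in G$; the goal is to prove $d_C(\alpha(g),\alpha(h)) = d_C(g,h)$, after which the surjectivity of $\alpha$ (it is a graph automorphism) finishes the argument. The case $g=h$ is trivial since $\alpha$ is injective.

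First I would establish $d_C(\alpha(g),\alpha(h)) \le d_C(g,h)$. Take a path $g = x_0, e_1, x_1, \ldots, e_m, x_m = h$ in $\dcCay{G,S}$ realizing the minimum number of colors, namely $n := d_C(g,h)$ of them, as provided by Theorem~\ref{thm characterization of cardinal metric}. Since $\alpha$ is a digraph automorphism, $(x_{i-1},x_i)$ is an arc if and only if $(\alpha(x_{i-1}),\alpha(x_i))$ is, and likewise with the endpoints interchanged; hence $\alpha(g) = \alpha(x_0), \alpha(e_1), \alpha(x_1), \ldots, \alpha(e_m), \alpha(x_m) = \alpha(h)$ is again an (undirected) path, where $\alpha$ acts on an arc through its endpoints. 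By the color-permuting property, the $i$-th step of the image path has color $\sigma(c_i')$ whenever the $i$-th step of the original path has color $c_i'$; hence the set of colors occurring along the image path is precisely the $\sigma$-image of the set occurring along the original one, and in particular has the same cardinality $n$. By the minimality clause in Theorem~\ref{thm characterization of cardinal metric}, $d_C(\alpha(g),\alpha(h)) \le n = d_C(g,h)$.

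For the reverse inequality I would note that $\alpha^{-1}$ is again a color-permuting automorphism: substituting $\alpha^{-1}(g),\alpha^{-1}(h)$ for $g,h$ in the biconditional of Definition~\ref{def: color-permuting automorphisms} shows that $\alpha^{-1}$ permutes colors according to $\sigma^{-1}$. Applying the inequality just proved to $\alpha^{-1}$ at the points $\alpha(g),\alpha(h)$ gives
$$
d_C(g,h) = d_C\bigl(\alpha^{-1}(\alpha(g)),\alpha^{-1}(\alpha(h))\bigr) \le d_C(\alpha(g),\alpha(h)).
$$
Combining the two inequalities yields $d_C(\alpha(g),\alpha(h)) = d_C(g,h)$, so $\alpha$ is a surjective isometry of $(G,d_C)$.

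I expect the only delicate point to be the bookkeeping in the first step: checking that the image of an undirected path is an undirected path and that the notion of "color" transfers correctly when an arc is traversed against its orientation. This is essentially the same care already exercised in the proof of Theorem~\ref{thm characterization of cardinal metric}, so it should cause no real difficulty. As an alternative route, one could instead invoke the structural identity $\paut{\dcCay{G,S}} = \cset{L_a\circ\tau}{a\in G,\ \tau\in\aut{G,S}}$ together with the facts, recorded earlier, that the left translations $L_a$ and the automorphisms fixing $S$ setwise are isometries of $(G,d_C)$, and that a composition of isometries is an isometry; this bypasses the graph-theoretic argument entirely.
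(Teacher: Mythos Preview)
Your proof is correct and follows exactly the approach the paper indicates: the paper offers no explicit argument beyond the phrase ``In view of Theorem~\ref{thm characterization of cardinal metric}'', and your write-up simply spells out how the path/color description of $d_C$ is preserved under a color-permuting automorphism. The alternative route you sketch at the end---via the decomposition $\paut{\dcCay{G,S}}=\{L_a\circ\tau:a\in G,\ \tau\in\aut{G,S}\}$ together with the already-noted fact that each $L_a$ and each $\tau\in\aut{G,S}$ is an isometry---is likewise in the paper (it is how Corollary~\ref{cor: class of isometry La and automorphism} is obtained), so both of your arguments are aligned with the text.
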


\begin{corollary}\label{cor: class of isometry La and automorphism}
If $(G, S)$ is a generated group, then 
\begin{equation}
\cset{L_a\circ \tau}{a\in G, \tau\in\aut{G, S}}\subseteq\Iso{G, d_C}.
\end{equation}
\end{corollary}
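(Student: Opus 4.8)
The plan is to observe that the set on the left-hand side is, by the characterization of color-permuting automorphisms recalled just above, precisely $\paut{\dcCay{G, S}}$. Since the preceding theorem asserts that every color-permuting automorphism of $\dcCay{G, S}$ is an isometry of $(G, d_C)$, the inclusion $\paut{\dcCay{G, S}}\subseteq\Iso{G, d_C}$ follows at once, and this is the whole statement. So at the level of the narrative the corollary is immediate; what remains is only to make sure the ingredients are genuinely in place and to phrase the deduction cleanly.

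If one prefers a self-contained argument that does not route through Cayley color digraphs, I would instead verify the two building blocks directly. First, each left translation $L_a$ is an isometry: for $g, h\in G$ one has $d_C(L_a(g), L_a(h)) = \norm{(ag)^{-1}(ah)} = \norm{g^{-1}h} = d_C(g, h)$, using only the definition \eqref{eqn: cardinal metric}. Second, each $\tau\in\aut{G, S}$ is an isometry: since $\tau$ is a homomorphism, $d_C(\tau(g), \tau(h)) = \norm{\tau(g)^{-1}\tau(h)} = \norm{\tau(g^{-1}h)}$, so it suffices to show $\norm{\tau(x)} = \norm{x}$ for all $x\in G$. If $A\subseteq S$ realizes $\norm{x} = \abs{A}$ with $x\in\gen{A}$, then $\tau(x)\in\gen{\tau(A)}$ and $\tau(A)\subseteq\tau(S) = S$ with $\abs{\tau(A)} = \abs{A}$ (as $\tau$ is injective), whence $\norm{\tau(x)}\leq\norm{x}$; applying the same inequality to $\tau^{-1}$, which also lies in $\aut{G, S}$ because $\tau^{-1}(S) = \tau^{-1}(\tau(S)) = S$, gives the reverse inequality. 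Finally, the composition of two isometries is an isometry, so $L_a\circ\tau\in\Iso{G, d_C}$ for every $a\in G$ and $\tau\in\aut{G, S}$, which is exactly the claimed inclusion.

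The argument has no real obstacle: the only point requiring a word of care is the equality $\norm{\tau(x)} = \norm{x}$ for $\tau\in\aut{G, S}$, and even there the bijectivity of $\tau$ together with $\tau(S) = S$ makes both inequalities routine. I would present the one-line proof via $\paut{\dcCay{G, S}}$ in the main text, possibly noting the direct verification in a remark for readers who want to see the mechanism explicitly.
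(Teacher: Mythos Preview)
Your proposal is correct and matches the paper's approach: the corollary is stated without proof, being immediate from the preceding theorem together with the cited characterization $\paut{\dcCay{G, S}} = \cset{L_a\circ \tau}{a\in G, \tau\in\aut{G, S}}$. Your alternative direct verification is also essentially anticipated earlier in Section~2.1, where the paper lists $L_g$ and automorphisms $\tau$ with $\tau(S)=S$ among the isometries of $(G,d_C)$.
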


In general, the inclusion in Corollary \ref{cor: class of isometry La and automorphism} is proper. For instance, if $G = \Z$, the additive infinite cyclic group, then there exists an isometry of $(\Z, d_C)$ that is not an automorphism of $\Z$. In fact, define a map $T$ by $T(2) = 3, T(3) = 2$, and $T(x) = x$ for all $x\in\Z\setminus\set{2, 3}$. It is clear that $T$ is a bijection from $\Z$ to itself. If $x = y$, then $T(x) = T(y)$ and so
$$
d_C(T(x), T(y)) = 0 = d_C(x, y).
$$
If $x\ne y$, then $T(x)\ne T(y)$ and so 
$$
d_C(T(x), T(y)) = 1 = d_C(x, y).
$$
This proves that $T$ is an isometry of $(\Z, d_C)$. However, $T$ does not define an automorphism of $\Z$; for example, $T(4) = 4$, whereas $T(2)+T(2) = 6$.

\vspace{0.3cm}
\noindent{\bf Acknowledgements.} This work was financially supported by the Research Center in Mathematics and Applied Mathematics, Chiang Mai University. The author would like to thank the referee for comments that improve Examples \ref{exm: isometric (G, dW) and (G, dC)} and \ref{exm: not isometric (G, dW) and (G, dC)}.

\bibliographystyle{amsplain}\addcontentsline{toc}{section}{References}
\bibliography{References}
\end{document}